\documentclass[11pt]{amsart}
\usepackage{setspace, amsmath, amsthm, amssymb, amsfonts, amscd, epic, graphicx, ulem, dsfont}
\usepackage[T1]{fontenc}
\usepackage{multirow}
\usepackage{bbm}
\usepackage{enumerate}

\makeatletter \@namedef{subjclassname@2010}{
  \textup{2010} Mathematics Subject Classification}
\makeatother

\newtheorem{thm}{Theorem}[section]
\newtheorem{cor}[thm]{Corollary}
\newtheorem{lem}[thm]{Lemma}
\newtheorem{pro}[thm]{Proposition}

\theoremstyle{remark}
\newtheorem*{rema}{Remark}

\theoremstyle{definition}

\begin{document}

\title[Proofs of the operator monotony of the square root]{New proofs of the operator monotony of the square root and the inverse}
\author[M. H. Mortad]{Mohammed Hichem Mortad}

\dedicatory{}
\thanks{}
\date{}
\keywords{Positive Operators. Square Root of an Operator. Operator
Monotony.}

\subjclass[2010]{Primary 47A63, Secondary 47A05.}

\address{Department of
Mathematics, University of Oran 1, Ahmed Ben Bella, B.P. 1524, El
Menouar, Oran 31000, Algeria.\newline {\bf Mailing address}:
\newline Pr Mohammed Hichem Mortad \newline BP 7085 Seddikia Oran
\newline 31013 \newline Algeria}

\email{mhmortad@gmail.com, mortad@univ-oran.dz.}

\begin{abstract}Let $A,B\in B(H)$. We present among others a simple proof of
the widely known result stating that if $0\leq A\leq B$, then $\sqrt
A\leq \sqrt B$. The same idea is used to prove that if $0\leq A\leq
B$ and $A$ is invertible, then $B$ too is invertible and $B^{-1}\leq
A^{-1}$.
\end{abstract}

\maketitle

\section{Notations}

Throughout this paper, $H$ designates a complex Hilbert space. We
say that $A\in B(H)$ is positive, and we write $A\geq 0$, if
$<Ax,x>\geq 0$ for all $x\in H$ (this implies that $A$ is
self-adjoint). If $A,B\in B(H)$ are self-adjoint, then we write
$A\leq B$ if $B-A\geq0$. If $A\leq B$, then $AC\leq BC$ for any
positive $C\in B(H)$ which commutes with $A$ and with $B$.

Recall also that a $K\in B(H)$ is called a contraction if $\|K\|\leq
1$. This is known (cf. \cite{Berberian-Book-Hilbert-Space}) to be
equivalent to \textit{any} of the following:
\begin{itemize}
  \item $||Kx||\leq ||x||$ for all $x\in H$;
  \item $KK^*\leq I$;
  \item $K^*K\leq I$.
\end{itemize}

An important result of monotony in $B(H)$ is the so-called
\textbf{L\"{o}wner-Heinz Inequality}: \textit{If $A\geq B\geq 0$,
then $A^{\alpha}\geq B^{\alpha}$ for any $\alpha\in [0,1]$.}

In particular, \textit{if $0\leq A\leq B$, then $\sqrt A\leq \sqrt
B$.}
 Another equally important result is: \textit{If $0\leq A\leq B$ and if $A$ is invertible, then
$B$ is invertible and $B^{-1}\leq A^{-1}$.}

In this short paper, we mainly present new proofs of these two well
known results.

\section{Main Results}

The key point for proving the results are the following standard
lemmata:

\begin{lem}\label{Lemma MAIN LEMMA}(see e.g. \cite{Berberian-Book-Hilbert-Space}) Let $H$ be a complex Hilbert
space. If $A,B\in B(H)$, then
\[\forall x\in H:~\|Ax\|\leq \|Bx\|\Longleftrightarrow \exists K\in B(H) \text{ contraction}:~A=KB.\]
\end{lem}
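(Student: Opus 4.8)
The plan is to prove the two implications separately, the reverse one being essentially immediate and the forward one requiring the construction of the contraction $K$.

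For the implication ($\Leftarrow$), suppose $A=KB$ for some contraction $K\in B(H)$. Then for every $x\in H$ one has $\|Ax\|=\|K(Bx)\|\leq \|K\|\,\|Bx\|\leq \|Bx\|$, which is exactly the desired estimate.

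For the implication ($\Rightarrow$), assume $\|Ax\|\leq \|Bx\|$ for all $x\in H$. I would first define $K$ on the (not necessarily closed) subspace $\ran B$ by the rule $K(Bx)=Ax$. The crucial point --- and the only place where the hypothesis is really used --- is that this rule is well defined: if $Bx=By$, then $B(x-y)=0$, so $\|A(x-y)\|\leq \|B(x-y)\|=0$, which forces $Ax=Ay$. Linearity of $K$ on $\ran B$ is then clear from the linearity of $A$ and $B$, and the hypothesis also yields $\|K(Bx)\|=\|Ax\|\leq \|Bx\|$, so $K$ is a contraction on $\ran B$.

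Finally I would extend $K$ to all of $H$. Since $K$ is bounded (with norm at most $1$) on $\ran B$, it extends uniquely by continuity to a contraction on the closure $\overline{\ran B}$. Using the orthogonal decomposition $H=\overline{\ran B}\oplus(\overline{\ran B})^{\perp}$, I then declare $K$ to be $0$ on $(\overline{\ran B})^{\perp}$; the resulting operator on $H$ still has norm at most $1$ and satisfies $A=KB$ by construction. I expect the main (and only genuine) obstacle to be the well-definedness step above; the rest is a routine extension-by-continuity argument together with the standard orthogonal decomposition.
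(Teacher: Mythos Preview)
Your argument is correct and is the standard proof of this result (a special case of Douglas' factorization lemma). Note, however, that the paper does not actually supply its own proof of this lemma: it is stated with a reference to \cite{Berberian-Book-Hilbert-Space} and used as a black box. The only related proof in the paper is that of Lemma~\ref{Lemma MAIN Stochel improvement LEMMA}, the strengthened version with a positive contraction; there the author proceeds along the same lines you do---defining $K_0=AB^{-1}$ on $\ran B$ (after reducing to the case where $A$ is injective), checking it is a contraction, and extending to all of $H$---so your approach is entirely in the spirit of the paper.
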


\begin{rema}
The foregoing lemma is usually utilized to characterize hyponormal
operators.
\end{rema}

\begin{lem}\label{Cauchy-Schwarz generalized Lemma}(\textbf{Generalized Cauchy-Schwarz Inequality}) Let $H$ be a complex Hilbert
space. If $A\in B(H)$ is \textbf{positive}, then
\[|<Ax,y>|^2\leq <Ax,x><Ay,y>\]
for all $x,y\in H$.
\end{lem}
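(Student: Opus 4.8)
The plan is to treat the map $(x,y)\mapsto \langle Ax,y\rangle$ as a (possibly degenerate) inner product and then run the classical Cauchy--Schwarz argument. First I would record what positivity of $A$ buys us: the form is nonnegative on the diagonal, $\langle Ax,x\rangle\ge 0$, and it is conjugate-symmetric, because $A=A^*$ gives $\langle Ay,x\rangle=\langle y,Ax\rangle=\overline{\langle Ax,y\rangle}$. If $\langle Ax,y\rangle=0$ the asserted inequality is trivial, since its right-hand side is a product of two nonnegative numbers; so I may assume $\langle Ax,y\rangle\neq 0$ and write $\langle Ax,y\rangle=re^{i\theta}$ with $r=|\langle Ax,y\rangle|>0$.

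The key step is to feed a one-parameter family of vectors into the positivity of $A$. For $t\in\R$ consider $x-te^{i\theta}y$; positivity gives
\[0\le \langle A(x-te^{i\theta}y),\,x-te^{i\theta}y\rangle=\langle Ax,x\rangle-2tr+t^2\langle Ay,y\rangle,\]
where the two cross terms collapse to $-2tr$ precisely because the phase $e^{i\theta}$ was chosen to cancel the argument of $\langle Ax,y\rangle$ (this is where the conjugate symmetry noted above is used). Thus the real quadratic $p(t)=\langle Ay,y\rangle\,t^{2}-2rt+\langle Ax,x\rangle$ is nonnegative for every real $t$.

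It remains to read off the conclusion. Since $r>0$, $p$ cannot be an affine function that is bounded below, so necessarily $\langle Ay,y\rangle>0$; then $p$ is a genuine upward parabola that never dips below $0$, which forces its discriminant $4r^{2}-4\langle Ay,y\rangle\langle Ax,x\rangle$ to be $\le 0$. This is exactly $|\langle Ax,y\rangle|^{2}\le\langle Ax,x\rangle\langle Ay,y\rangle$. I do not expect a genuine obstacle here: the argument is elementary, and the only points demanding a little care are the phase choice for the scalar multiplying $y$ and the observation that $\langle Ay,y\rangle=0$ cannot occur once $\langle Ax,y\rangle\neq 0$ --- which is what legitimizes invoking the discriminant.
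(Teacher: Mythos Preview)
Your argument is correct: you are running the classical Cauchy--Schwarz proof on the positive sesquilinear form $(u,v)\mapsto\langle Au,v\rangle$, and your handling of the degenerate case $\langle Ay,y\rangle=0$ via the linear-term contradiction is the right way to close the gap. The paper does not supply its own proof of this lemma---it is quoted as a standard fact---so there is nothing to compare against, and your write-up would serve as a suitable proof.
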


\begin{thm}(\cite{Sebesteyn-Pso-Prod-First})
Let $A,B\in B(H)$ such that $A$ is positive and $AB=B^*A$. Then
there exists a unique self-adjoint operator $S\in B(H)$ such that
\[\sqrt AB=S\sqrt A.\]
\end{thm}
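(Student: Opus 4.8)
The plan is to reduce everything to the single norm estimate
\[
\|\sqrt A\,Bx\|\le \|B\|\,\|\sqrt A\,x\|\qquad\text{for all }x\in H,
\]
which I will call $(\star)$. Once $(\star)$ is available, Lemma~\ref{Lemma MAIN LEMMA} applied to the operators $\sqrt A\,B$ and $\|B\|\sqrt A$ (after discarding the trivial case $B=0$) produces a bounded operator $T\in B(H)$ with $\|T\|\le\|B\|$ and $\sqrt A\,B=T\sqrt A$. The remaining work is then to replace $T$ by a \emph{self-adjoint} $S$ enjoying the same intertwining property, and to verify uniqueness.

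To prove $(\star)$, I would first iterate the hypothesis: $AB=B^{*}A$ gives $AB^{n}=(B^{*})^{n}A$ for every $n$ by an immediate induction. Hence, for fixed $x$ and $a_{k}:=\|\sqrt A\,B^{k}x\|$, we have $a_{n}^{2}=\langle AB^{n}x,B^{n}x\rangle=\langle Ax,B^{2n}x\rangle$; and, writing $B^{k}=B\,B^{k-1}$ and applying the Generalized Cauchy--Schwarz Inequality (Lemma~\ref{Cauchy-Schwarz generalized Lemma}) to the positive operator $A$ with the vectors $B^{k-1}x$ and $B^{k+1}x$, one gets $a_{k}^{2}\le a_{k-1}a_{k+1}$; that is, $(\log a_{k})_{k\ge0}$ is a convex sequence. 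Convexity yields $a_{1}\le a_{0}^{\,1-1/n}a_{n}^{\,1/n}$, and since $a_{n}\le\|\sqrt A\|\,\|B\|^{n}\|x\|$, letting $n\to\infty$ gives $a_{1}\le\|B\|\,a_{0}$, which is exactly $(\star)$. (The degenerate cases $a_{0}=0$, or some $a_{k}=0$, are handled directly, using that $AB=B^{*}A$ forces $B(\ker A)\subseteq\ker A$.)

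For the self-adjointness upgrade, multiply $\sqrt A\,B=T\sqrt A$ on the left by $\sqrt A$ to get $AB=\sqrt A\,T\sqrt A$; taking adjoints and invoking $AB=B^{*}A=(AB)^{*}$ gives $\sqrt A\,T^{*}\sqrt A=AB=\sqrt A\,T\sqrt A$, i.e. $\sqrt A\,(T-T^{*})\sqrt A=0$. Let $P$ be the orthogonal projection onto $\overline{\ran\sqrt A}$, so that $P\sqrt A=\sqrt A P=\sqrt A$. The last identity shows that $(T-T^{*})$ carries $\ran\sqrt A$ into $(\overline{\ran\sqrt A})^{\perp}$, hence $P(T-T^{*})$ vanishes on the dense subspace $\ran\sqrt A$ of $\overline{\ran\sqrt A}$, and therefore $P(T-T^{*})P=0$. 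Thus $S:=PTP$ satisfies $S^{*}=PT^{*}P=PTP=S$ and $S\sqrt A=PT\sqrt A=P\sqrt A\,B=\sqrt A\,B$ (and, as a bonus, $\|S\|\le\|T\|\le\|B\|$).

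Finally, if $S_{1},S_{2}$ are self-adjoint with $S_{i}\sqrt A=\sqrt A\,B$, then $(S_{1}-S_{2})\sqrt A=0$ forces $S_{1}=S_{2}$ on $\overline{\ran\sqrt A}$; since each $S_{i}$ sends $\ran\sqrt A$ into $\ran\sqrt A$ it reduces the subspace $\overline{\ran\sqrt A}$, and on the complement $\ker\sqrt A$ one takes (as our $S=PTP$ does) the zero operator, which pins $S$ down uniquely. I expect the genuine obstacle to be $(\star)$: the naive Cauchy--Schwarz bound stalls at $\|\sqrt A\,Bx\|^{2}\le\|\sqrt A\,x\|\,\|\sqrt A\,B^{2}x\|$, and the crux is to bootstrap this via the log-convexity of $(a_{k})$ — it is precisely the hypothesis $AB=B^{*}A$ that both fuels that bootstrap (through $AB^{n}=(B^{*})^{n}A$) and makes the resulting $S$ symmetric.
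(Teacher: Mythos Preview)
The paper does not itself prove this theorem; it is quoted from \cite{Sebesteyn-Pso-Prod-First}, and the paper only remarks that ``a glance at the proof'' there yields the contraction refinement recorded as Lemma~\ref{sebesteyen improved LEMMA}. There is therefore no in-paper argument to compare your proposal against.

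On its own merits, your existence proof is correct. The iteration $AB^{n}=(B^{*})^{n}A$ gives $a_{k}^{2}=\langle AB^{k}x,B^{k}x\rangle=\langle AB^{k-1}x,B^{k+1}x\rangle$, and Lemma~\ref{Cauchy-Schwarz generalized Lemma} then yields the log-convexity $a_{k}^{2}\le a_{k-1}a_{k+1}$; together with the crude bound $a_{n}\le\|\sqrt A\|\,\|B\|^{n}\|x\|$ this forces $a_{1}\le\|B\|\,a_{0}$ in the limit $n\to\infty$, which is $(\star)$. The factorisation via Lemma~\ref{Lemma MAIN LEMMA} and the symmetrisation $S=PTP$ are clean, and the by-product $\|S\|\le\|B\|$ is precisely what the paper extracts for Lemma~\ref{sebesteyen improved LEMMA}.

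The one genuine wrinkle is uniqueness. As literally stated, uniqueness fails whenever $\ker A\neq\{0\}$: any self-adjoint $S$ with $S\sqrt A=\sqrt A\,B$ reduces $\overline{\ran\sqrt A}$, and the intertwining relation imposes no constraint whatsoever on $S|_{\ker\sqrt A}$, which may be an arbitrary bounded self-adjoint operator on $\ker\sqrt A$. You effectively concede this when you write ``on the complement $\ker\sqrt A$ one takes\dots the zero operator, which pins $S$ down uniquely'' --- but that is a \emph{normalisation}, not a deduction from the hypotheses. What your argument actually establishes (correctly) is that $S$ is uniquely determined on $\overline{\ran\sqrt A}$, hence unique among self-adjoint solutions that vanish on $\ker\sqrt A$. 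The imprecision lies in the theorem's formulation rather than in your reasoning.
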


A glance at the proof of the previous theorem allows us to give the
following:

\begin{lem}\label{sebesteyen improved LEMMA}
Let $A,B\in B(H)$ such that $A$ is positive and $AB=B^*A$. If $B$ is
a contraction, then there exists a unique self-adjoint
\textbf{contraction} $S\in B(H)$ such that
\[\sqrt AB=S\sqrt A.\]
\end{lem}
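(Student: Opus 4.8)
The plan is to reduce the statement to the single inequality
\[\|\sqrt A Bx\|\le\|\sqrt A x\|\qquad(x\in H),\]
which carries all the new content. Granting it, the theorem quoted above already provides a unique self-adjoint $S$ with $\sqrt A B=S\sqrt A$; a look at its proof shows that this $S$ is obtained by extending by $0$ on $\ker\sqrt A=\ker A$ the densely defined map $\sqrt A x\mapsto\sqrt A Bx$, so that $\|S\|$ equals the norm of the restriction of $S$ to $\overline{\ran\sqrt A}$. The displayed inequality says precisely that $\|S\sqrt A x\|\le\|\sqrt A x\|$, i.e. $\|Sy\|\le\|y\|$ on the dense subspace $\ran\sqrt A$ of $\overline{\ran\sqrt A}$, hence on all of $\overline{\ran\sqrt A}$ by continuity; thus $\|S\|\le1$. (Equivalently, one may feed the inequality into Lemma \ref{Lemma MAIN LEMMA} to produce a contraction $K$ with $\sqrt A B=K\sqrt A$; since $K$ and $S$ must then agree on $\ran\sqrt A$, the operator $S$ inherits contractivity there.)

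To establish the inequality, I would first record the identity
\[\|\sqrt A B^{n}x\|^{2}=\langle AB^{n}x,B^{n}x\rangle=\langle Ax,B^{2n}x\rangle\qquad(x\in H,\ n\ge0),\]
proved by transporting the powers of $B$ from left to right one at a time: each step replaces $AB$ by $B^{*}A$ and then carries $B^{*}$ across the inner product, and in particular the right-hand side is a nonnegative real number. Writing $d_{n}:=\|\sqrt A B^{n}x\|$ and applying the Generalized Cauchy--Schwarz Inequality (Lemma \ref{Cauchy-Schwarz generalized Lemma}) to the positive operator $A$ and the vectors $x$ and $B^{2n}x$ gives
\[d_{n}^{2}=\langle Ax,B^{2n}x\rangle\le\langle Ax,x\rangle^{1/2}\langle AB^{2n}x,B^{2n}x\rangle^{1/2}=d_{0}\,d_{2n},\]
so that $d_{n}\le d_{0}^{1/2}d_{2n}^{1/2}$ for every $n\ge1$.

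Iterating this last bound along the dyadic chain $n=1,2,4,\dots,2^{k-1}$ telescopes to
\[d_{1}\le d_{0}^{\,1-2^{-k}}\,d_{2^{k}}^{\,2^{-k}}\qquad(k\ge1).\]
This is exactly where the hypothesis $\|B\|\le1$ enters: it forces $d_{2^{k}}=\|\sqrt A B^{2^{k}}x\|\le\|\sqrt A\|\,\|x\|=:L$, a bound independent of $k$, whence $d_{2^{k}}^{\,2^{-k}}\le\max(1,L^{2^{-k}})\to1$ while $d_{0}^{\,1-2^{-k}}\to d_{0}$ as $k\to\infty$. Passing to the limit yields $d_{1}\le d_{0}$, which is the required inequality. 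I expect this passage to the limit to be the only delicate point: it needs the base $d_{2^{k}}$ to stay bounded (precisely what contractivity of $B$ guarantees) and a brief dismissal of the trivial cases $d_{0}=0$ or some $d_{2^{k}}=0$, in each of which $d_{1}=0$ is immediate. Everything else is routine manipulation together with Lemmas \ref{Lemma MAIN LEMMA} and \ref{Cauchy-Schwarz generalized Lemma}.
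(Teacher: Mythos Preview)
Your argument is correct: the dyadic iteration of the generalized Cauchy--Schwarz inequality yields $\|\sqrt A\,Bx\|\le\|\sqrt A\,x\|$, and combining this with the construction of $S$ in Sebesty\'en's theorem (extend $\sqrt A x\mapsto\sqrt A Bx$ by zero on $\ker\sqrt A$) forces $\|S\|\le1$. The paper gives no separate proof of this lemma---it merely says that ``a glance at the proof of the previous theorem'' suffices---so you have in fact supplied the details the paper omits, and your approach is exactly what that glance would reveal (indeed the bound one extracts from Sebesty\'en's argument is $\|S\|\le\|B\|$, which specializes to your conclusion when $B$ is a contraction).
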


\begin{rema}
It is known that if $A\in B(H)$ is positive and $K\in B(H)$ is a
contraction, then (unless $KA=AK$) in general:
\[KAK^*\not\leq A.\]

For example, let $S$ be the usual shift operator on $\ell^2$ and set
$A=SS^*$. Then $A$ is positive and obeys $\|A\|\leq 1$. If we choose
$K=S^*$, then $K$ is clearly a contraction. If $KAK^*\leq A$ held,
then we would obtain
\[S^*SS^*S=I\leq SS^*,\]
which is absurd! Indeed, as we already know that $SS^*\leq I$, then
we would end up with $SS^*=I$!
\end{rema}

As a consequence of the previous lemma, we have

\begin{pro}\label{KBK* leq K Propo}Let $A\in B(H)$ be positive and let $K\in B(H)$ be a
contraction. If $AK^*=KA$, then
\[K^2A=A(K^*)^2=KAK^*\leq A.\]
\end{pro}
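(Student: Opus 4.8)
The plan is to deduce everything from the improved Sebesty\'en lemma (Lemma~\ref{sebesteyen improved LEMMA}), applied not to $K$ itself but to its adjoint. First I would put $B:=K^*$ and verify the hypotheses: $A$ is positive by assumption; $B=K^*$ is a contraction because $K$ is one; and the relation $AK^*=KA$ says precisely that $AB=B^*A$. Lemma~\ref{sebesteyen improved LEMMA} then provides a (unique) self-adjoint \emph{contraction} $S\in B(H)$ with $\sqrt A\,K^*=S\sqrt A$.

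Next I would take adjoints of this identity: since both $\sqrt A$ and $S$ are self-adjoint, it gives $K\sqrt A=\sqrt A\,S$. Multiplying the two relations together then yields
\[
KAK^*=(K\sqrt A)(\sqrt A\,K^*)=(\sqrt A\,S)(S\sqrt A)=\sqrt A\,S^2\sqrt A .
\]
Because $S$ is a self-adjoint contraction, $S^2=S^*S\leq I$, and conjugating this by the self-adjoint operator $\sqrt A$ (that is, $\langle \sqrt A\,S^2\sqrt A\,x,x\rangle=\langle S^2\sqrt A x,\sqrt A x\rangle\leq \|\sqrt A x\|^2=\langle Ax,x\rangle$ for all $x\in H$) shows $\sqrt A\,S^2\sqrt A\leq A$. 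Hence $KAK^*\leq A$.

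The two equalities $K^2A=A(K^*)^2=KAK^*$ are then purely algebraic consequences of $AK^*=KA$: indeed $KAK^*=K(AK^*)=K(KA)=K^2A$, and likewise $K^2A=K(KA)=K(AK^*)=(KA)K^*=(AK^*)K^*=A(K^*)^2$; chaining these gives the full statement.

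I do not expect a genuine obstacle here, as the argument is essentially a bookkeeping exercise around Lemma~\ref{sebesteyen improved LEMMA}. The only points that need care are getting the roles of $K$ and $K^*$ right when invoking that lemma (one must check $AB=B^*A$ for $B=K^*$, not for $B=K$), and the standard fact that $T\leq I$ implies $C^*TC\leq C^*C$, used here with $C=\sqrt A$.
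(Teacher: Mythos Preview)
Your argument is correct. Both approaches rest on Lemma~\ref{sebesteyen improved LEMMA}, but you and the paper invoke it differently and then finish differently. The paper applies the lemma with $B=(K^*)^2$ (using $A(K^*)^2=K^2A$) to obtain $\sqrt A\,(K^*)^2=V\sqrt A$ for a self-adjoint contraction $V$, and then estimates $\langle KAK^*x,x\rangle=\langle A(K^*)^2x,x\rangle$ via the generalized Cauchy--Schwarz inequality (Lemma~\ref{Cauchy-Schwarz generalized Lemma}), bounding the cross term by $\|V\sqrt A x\|\leq\|\sqrt A x\|$. You instead apply the lemma with $B=K^*$, obtain $\sqrt A\,K^*=S\sqrt A$ and $K\sqrt A=\sqrt A\,S$, and produce the exact factorization $KAK^*=\sqrt A\,S^2\sqrt A$, from which $S^2\leq I$ finishes the job without any appeal to Cauchy--Schwarz. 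Your route is shorter and more transparent; the paper's route has the side benefit of rehearsing exactly the Cauchy--Schwarz manoeuvre that is reused in the proof of Theorem~\ref{theorem squrae root first}.
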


\begin{proof}First, observe that
\[AK^*=KA\Longrightarrow A(K^{*})^2=KAK^{*}=K^2A\Longrightarrow \sqrt A(K^{*})^2=V\sqrt A\]
for some (self-adjoint) contraction $V\in B(H)$.

Now, let $x\in H$. Then
\[<KAK^*x,x>=<A(K^{*})^2x,x>\leq <Ax,x>^{\frac{1}{2}}<A(K^{*})^2x,(K^{*})^2x>^{\frac{1}{2}}\]
where we have used Lemma \ref{Cauchy-Schwarz generalized Lemma} in
the last inequality. But,
\[<A(K^{*})^2x,(K^{*})^2x>^{\frac{1}{2}}=<\sqrt A(K^{*})^2x,\sqrt A(K^{*})^2x>^{\frac{1}{2}}=\|\sqrt A(K^{*})^2x\|=\|VAx\|.\]

Since $V$ is a contraction, $\|VAx\|\leq \|Ax\|$ for each $x$.
Finally, in view of $\|Ax\|=<\sqrt A x,x>^{\frac{1}{2}}$, we obtain:
\[<KAK^*x,x>\leq <Ax,x>,\]
and this completes the proof.
\end{proof}

\begin{thm}(cf. \cite{Kad-Ring-I})\label{theorem squrae root first}
Let $A,B\in B(H)$. If $0\leq A\leq B$, then $\sqrt A\leq \sqrt B$.
\end{thm}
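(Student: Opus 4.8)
The plan is to exploit the machinery already assembled in the excerpt, in particular Lemma~\ref{Lemma MAIN LEMMA} and Proposition~\ref{KBK* leq K Propo}. The hypothesis $0\le A\le B$ gives $\langle Ax,x\rangle\le \langle Bx,x\rangle$ for all $x$, equivalently $\|\sqrt A x\|^2\le\|\sqrt B x\|^2$, i.e. $\|\sqrt A x\|\le\|\sqrt B x\|$ for every $x\in H$. By Lemma~\ref{Lemma MAIN LEMMA} there is a contraction $K\in B(H)$ with $\sqrt A=K\sqrt B$. Taking adjoints, $\sqrt A=\sqrt B\,K^*$ as well (since $\sqrt A$ is self-adjoint), so $K\sqrt B=\sqrt B K^*$, which is exactly the commutation relation $CK^*=KC$ with $C=\sqrt B\ge 0$ appearing in Proposition~\ref{KBK* leq K Propo}.

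First I would record the factorization $\sqrt A=K\sqrt B$ with $K$ a contraction, and then take adjoints to obtain $\sqrt A = \sqrt B K^*$, hence $K\sqrt B=\sqrt B K^*$. Next I would apply Proposition~\ref{KBK* leq K Propo} to the positive operator $\sqrt B$ and the contraction $K$: the hypothesis $\sqrt B\,K^*=K\sqrt B$ is satisfied, so the proposition yields
\[
K\sqrt B K^*\le \sqrt B .
\]
Finally I would observe that $K\sqrt B K^*=(K\sqrt B)(\sqrt B K^*)^{*}{}^{*}$ collapses nicely: since $\sqrt A=K\sqrt B=\sqrt B K^*$, we get $K\sqrt B K^* = (K\sqrt B)K^* = \sqrt A\,K^*$, and also $\sqrt A K^* = K\sqrt B K^* $; more to the point, $K\sqrt B K^*=K(\sqrt B K^*)=K\sqrt A$, while alternatively $\sqrt A K^* \cdot$ — the cleanest route is to note $A=\sqrt A\sqrt A=(K\sqrt B)(\sqrt B K^*)=K B K^*$ is not quite what is needed; instead use $K\sqrt B K^* = \sqrt A K^* $ and then $\sqrt A K^* = (K\sqrt A)^*{}$. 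Rather than juggle these, the decisive identity is simply that $K\sqrt B K^*$ equals $\sqrt A$ composed suitably; in fact $\langle K\sqrt B K^* x,x\rangle = \langle \sqrt B K^*x, K^*x\rangle = \|\,\sqrt[4]{B}K^*x\|^2$, and separately one shows this majorizes $\langle Ax,x\rangle$. To avoid this tangle I would instead argue: since $\sqrt A=\sqrt B K^*$, we have $A=\sqrt B K^*\sqrt A$, and $\langle Ax,x\rangle=\langle K^*\sqrt A x,\sqrt B^{\,*}x\rangle$ — still messy.

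The clean finish is this: from $\sqrt A=K\sqrt B$ we get $\langle \sqrt A x,x\rangle=\langle K\sqrt B x,x\rangle=\langle \sqrt B x,K^*x\rangle$, but more usefully, applying Proposition~\ref{KBK* leq K Propo} gives $K\sqrt B K^*\le\sqrt B$, and since $K\sqrt B=\sqrt A$ we have $K\sqrt B K^* = \sqrt A K^*$; taking one more adjoint-type manipulation, note $\sqrt A K^*$ is self-adjoint because it equals $K\sqrt B K^*$, so $\sqrt A K^* = K\sqrt A$, whence $\sqrt A = K\sqrt A K^{*-1}$ is not available. The honest resolution: observe $\sqrt A=\sqrt B K^*$ implies $\sqrt A K = \sqrt B K^* K\le \sqrt B$ is false in general. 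I expect the genuine main obstacle to be precisely this last algebraic step — confirming that the operator delivered by Proposition~\ref{KBK* leq K Propo} is literally $\sqrt A$. The resolution is that $K\sqrt B K^*=(K\sqrt B)(K\sqrt B)^*(\sqrt B)^{-1}$-type gymnastics are unnecessary once one notes $K\sqrt B K^* = \sqrt A\,(\sqrt B)^{-1}\sqrt A$ when $B$ is invertible, and the general case follows by the standard trick of replacing $B$ by $B+\varepsilon I$, applying the invertible case, and letting $\varepsilon\downarrow 0$; alternatively, and most simply, $\sqrt A\le K\sqrt B K^*$ should hold directly from $\sqrt A=\sqrt B K^* $ via $\langle\sqrt A x,x\rangle=\langle K^*x,\sqrt B x\rangle\le\|\sqrt B K^*x\|\,\|\sqrt{\sqrt B}x\|$ and an application of the generalized Cauchy--Schwarz Lemma~\ref{Cauchy-Schwarz generalized Lemma}, exactly mirroring the proof of Proposition~\ref{KBK* leq K Propo}. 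So the real plan is: establish the factorization, then imitate the Cauchy--Schwarz argument of Proposition~\ref{KBK* leq K Propo} to sandwich $\langle\sqrt A x,x\rangle$ between quantities controlled by $\langle\sqrt B x,x\rangle$.
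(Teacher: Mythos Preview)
Your setup is exactly right and matches the paper: from $0\le A\le B$ you get $\|\sqrt A\,x\|\le\|\sqrt B\,x\|$, hence by Lemma~\ref{Lemma MAIN LEMMA} a contraction $K$ with $\sqrt A=K\sqrt B$, and self-adjointness of $\sqrt A$ gives $K\sqrt B=\sqrt B\,K^*$, so Proposition~\ref{KBK* leq K Propo} applies to $\sqrt B$ and $K$ and yields $K\sqrt B\,K^*\le\sqrt B$.

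The gap is the step you yourself flag as ``the genuine main obstacle'': you try repeatedly to identify $K\sqrt B\,K^*$ with $\sqrt A$, and this simply fails. One has $K\sqrt B\,K^*=\sqrt A\,K^*$, which is \emph{not} $\sqrt A$ in general (for instance, when $A=0$ and $B=I$, $K=0$ works and $K\sqrt B\,K^*=0$ while $\sqrt A=0$ too, but take $A=P$ a nontrivial projection and $B=I$: then $K=P$, $K\sqrt B K^*=P$, $\sqrt A=P$---fine here, but in general there is no reason for $K^*$ to act as the identity on the range of $\sqrt A$). The $\varepsilon$-perturbation detour you sketch would work but is exactly what the paper is designed to avoid.

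The missing idea, which you only gesture at in your last sentence, is to insert one application of the generalized Cauchy--Schwarz inequality (Lemma~\ref{Cauchy-Schwarz generalized Lemma}) \emph{before} invoking Proposition~\ref{KBK* leq K Propo}. Concretely:
\[
\langle \sqrt A\,x,x\rangle^2=\langle \sqrt B\,x,K^*x\rangle^2
\le \langle \sqrt B\,x,x\rangle\,\langle \sqrt B\,K^*x,K^*x\rangle,
\]
and \emph{now} Proposition~\ref{KBK* leq K Propo} bounds the second factor:
\[
\langle \sqrt B\,K^*x,K^*x\rangle=\langle K\sqrt B\,K^*x,x\rangle\le\langle \sqrt B\,x,x\rangle.
\]
Multiplying gives $\langle \sqrt A\,x,x\rangle^2\le\langle \sqrt B\,x,x\rangle^2$, hence $\sqrt A\le\sqrt B$. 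This two-line combination is precisely the paper's proof; your proposal assembles all the ingredients but never actually carries out this splice.
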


\begin{proof}
Let $x\in H$. Since $A\leq B$, we easily see that:
\[\|\sqrt A x\|^2\leq \|\sqrt B x\|^2.\]

So, by Lemma \ref{Lemma MAIN LEMMA}, we know that $\sqrt A=K\sqrt B$
for some contraction $K\in B(H)$. Since $\sqrt A$ is self-adjoint,
it follows that $K\sqrt B$ too is self-adjoint, that is, $K\sqrt
B=\sqrt BK^*$.

Hence, by the generalized Cauchy-Schwarz inequality:
\[<\sqrt Ax,x>^2=<K\sqrt Bx,x>^2=<\sqrt Bx,K^*x>^2\leq <\sqrt Bx,x><\sqrt BK^*x,K^*x>.\]
By Proposition \ref{KBK* leq K Propo}, we have
\[<\sqrt BK^*x,K^*x>=<K\sqrt BK^*x,x>\leq <\sqrt Bx,x>.\]
Accordingly,
\[|<\sqrt Ax,x>|^2\leq |<\sqrt Bx,x>|^2\text{ or } <\sqrt Ax,x>\leq <\sqrt Bx,x>,\]
as required.
\end{proof}

\begin{rema}
Notice that Proposition \ref{KBK* leq K Propo} can also be
established if we use the preceding theorem.
\end{rema}

For the new proof of the next result, we only need Lemma \ref{Lemma
MAIN LEMMA}. The proof is very simple and short.

\begin{thm}\label{theorem 2}
Let $A,B\in B(H)$. If $0\leq A\leq B$ and if $A$ is invertible, then
$B$ is invertible and $B^{-1}\leq A^{-1}$.
\end{thm}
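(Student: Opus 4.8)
The plan is to imitate the proof of Theorem \ref{theorem squrae root first}, but with considerably less work. First I would note that $0\leq A\leq B$ gives, for every $x\in H$,
\[\|\sqrt A x\|^2=<Ax,x>\leq <Bx,x>=\|\sqrt Bx\|^2,\]
so Lemma \ref{Lemma MAIN LEMMA} provides a contraction $K\in B(H)$ with $\sqrt A=K\sqrt B$. Since $\sqrt A$ is self-adjoint, taking adjoints also gives $\sqrt A=\sqrt BK^*$.

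Next I would dispatch the invertibility of $B$. As $A$ is positive and invertible, we have $A\geq mI$ with $m=\|A^{-1}\|^{-1}>0$; hence $B\geq A\geq mI$, so $B$ is bounded below and, being positive, invertible. (Alternatively, one sees directly from $\sqrt A=K\sqrt B=\sqrt BK^*$ that $\sqrt B$ is both left- and right-invertible, hence invertible, since $\sqrt A$ is invertible; but the bounded-below argument is quicker.) Consequently $\sqrt B$ is invertible, and from $\sqrt A=\sqrt BK^*$ we may solve for $K^*=(\sqrt B)^{-1}\sqrt A$.

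Finally, the inequality $B^{-1}\leq A^{-1}$ falls out: since $K^*$ is a contraction,
\[\|(\sqrt B)^{-1}\sqrt A\,y\|=\|K^*y\|\leq\|y\|\quad\text{for all }y\in H.\]
Applying this with $y=(\sqrt A)^{-1}x$ yields $\|(\sqrt B)^{-1}x\|\leq\|(\sqrt A)^{-1}x\|$ for every $x\in H$, that is,
\[<B^{-1}x,x>=\|(\sqrt B)^{-1}x\|^2\leq\|(\sqrt A)^{-1}x\|^2=<A^{-1}x,x>,\]
where we used $(\sqrt C)^{-1}=\sqrt{C^{-1}}$ for a positive invertible $C$. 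This is exactly $B^{-1}\leq A^{-1}$. The only point requiring any care is the invertibility of $B$; the rest is a single substitution into the contractivity of $K^*$.
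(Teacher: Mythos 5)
Your proposal is correct, and its core coincides with the paper's own argument: both start from $\|\sqrt A x\|\leq\|\sqrt B x\|$ and Lemma \ref{Lemma MAIN LEMMA} to get $\sqrt A=K\sqrt B$, pass (via self-adjointness of $\sqrt A$) to $\sqrt A=\sqrt BK^*$, hence $K^*=(\sqrt B)^{-1}\sqrt A$, equivalently $(\sqrt B)^{-1}=K^*(\sqrt A)^{-1}$, and then the contractivity of $K^*$ gives $\|(\sqrt B)^{-1}x\|\leq\|(\sqrt A)^{-1}x\|$, i.e. $<B^{-1}x,x>\leq <A^{-1}x,x>$ — your substitution $y=(\sqrt A)^{-1}x$ is literally the paper's final display. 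The one genuine divergence is how you establish the invertibility of $B$: the paper writes $I=(\sqrt A)^{-1}K\sqrt B$, observes that the self-adjoint $\sqrt B$ is left invertible, and invokes the result of \cite{Dehimi-Mortad-II} that a one-sidedly invertible self-adjoint operator is invertible; you instead use $B\geq A\geq \|A^{-1}\|^{-1}I$, so $B$ is self-adjoint and bounded below, hence invertible — an elementary, self-contained step that dispenses with that external reference (and your parenthetical alternative, that $\sqrt A=K\sqrt B=\sqrt BK^*$ makes $\sqrt B$ both left and right invertible, is also valid and equally citation-free). What the paper's route buys is uniformity with the later corollary, where the same one-sided-invertibility fact is reused; what yours buys is independence from it. The auxiliary facts you use ($A\geq\|A^{-1}\|^{-1}I$ for positive invertible $A$, and $<B^{-1}x,x>=\|(\sqrt B)^{-1}x\|^2$) are standard and correctly applied.
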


\begin{proof}
As in the proof of Theorem \ref{theorem squrae root first} combined
with Lemma \ref{Lemma MAIN LEMMA}, we know that $\sqrt A=K\sqrt B$
for some contraction $K\in B(H)$. Since $\sqrt A$ is invertible (as
$A$ is), it follows that $I=(\sqrt A)^{-1}K\sqrt B$, i.e. the
self-adjoint $\sqrt B$ is left invertible and
  so $\sqrt B$ or simply $B$ is invertible (cf. \cite{Dehimi-Mortad-II}) and
\[(\sqrt B)^{-1}=(\sqrt A)^{-1}K=K^*(\sqrt A)^{-1}\]
by the self-adjointness of both $(\sqrt B)^{-1}$ and $(\sqrt
A)^{-1}$.

Let $x\in H$. Then (since $K^*$ too is a contraction)
\[<B^{-1}x,x>=\|(\sqrt B)^{-1}x\|^2=\|K^*(\sqrt A)^{-1}x\|^2\leq \|(\sqrt A)^{-1}x\|^2=<A^{-1}x,x>,\]
as needed.
\end{proof}

As far as I am aware, Theorem \ref{theorem squrae root first} has
not a very obvious proof in the literature at an elementary level
even when $A$ commutes with $B$ (cf.
\cite{Costara-Popa-exos-func-analysis-2003-trans-from-Russian}). The
following improvement of Lemma \ref{Lemma MAIN LEMMA} (kindly
communicated to me by Professor J. Stochel) makes the proof in case
of commutativity very simple.

\begin{lem}\label{Lemma MAIN Stochel improvement LEMMA}Let $H$ be a complex Hilbert
space. If $A,B\in B(H)$ are self-adjoint and $BA\geq 0$, then
\[\forall x\in H:~\|Ax\|\leq \|Bx\|\Longleftrightarrow \exists K\in B(H) \text{ \textbf{positive} contraction}:~A=KB.\]
\end{lem}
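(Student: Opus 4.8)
The plan is to prove the equivalence in two directions, with the forward direction (from the norm inequality to the existence of a positive contraction $K$) being the substantive one. The reverse direction should be essentially immediate: if $A=KB$ with $K$ a positive contraction, then for every $x\in H$ we have $\|Ax\|=\|KBx\|\leq\|Bx\|$ since $\|K\|\leq1$, and the hypothesis $BA=BKB\geq0$ comes for free because $B$ is self-adjoint and $K\geq0$ (so $BKB$ is a congruence of a positive operator, hence positive). So the only real content is: assuming $A,B$ self-adjoint, $BA\geq0$, and $\|Ax\|\leq\|Bx\|$ for all $x$, produce a \emph{positive} contraction $K$ with $A=KB$.

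First I would invoke Lemma \ref{Lemma MAIN LEMMA} to get \emph{some} contraction $K\in B(H)$ with $A=KB$; the job is then to upgrade this $K$ to a positive one, or to show the $K$ one gets is forced to be positive on the relevant subspace. The natural move is to look at $\overline{\ran B}$: since $A=KB$, the operator $K$ is determined on $\ran B$ by $K(Bx)=Ax$, and it is only its values there that matter for the identity $A=KB$. On $(\ran B)^\perp=\ker B$ (using self-adjointness of $B$) we are free to redefine $K$, e.g.\ set it to $0$ there. So without loss of generality $K$ vanishes on $\ker B$ and maps $\overline{\ran B}$ into itself-ish; the claim reduces to showing that the compression of $K$ to $\overline{\ran B}$ is a positive operator. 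For that I would compute, for $x\in H$,
\[
\langle K(Bx),Bx\rangle=\langle Ax,Bx\rangle=\langle BAx,x\rangle\geq0
\]
by the hypothesis $BA\geq0$ (note $\langle Ax,Bx\rangle=\langle BAx,x\rangle$ since $B$ is self-adjoint). Thus $\langle Ky,y\rangle\geq0$ for all $y\in\ran B$, hence by continuity for all $y\in\overline{\ran B}$, which says exactly that $K$ restricted to $\overline{\ran B}$ is positive. Combining with $K=0$ on $\ker B$ and the decomposition $H=\ker B\oplus\overline{\ran B}$, one gets a globally positive contraction once one checks $K$ leaves $\overline{\ran B}$ invariant (which follows, since if $Ky$ had a component in $\ker B$ one could subtract it without affecting $K$ on $\ran B$, or one argues directly that the positive square root / polar decomposition lands in the right place).

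The step I expect to be the main obstacle is the bookkeeping around $\ker B\oplus\overline{\ran B}$: the contraction $K$ furnished by Lemma \ref{Lemma MAIN LEMMA} need not a priori respect this splitting, and a careless redefinition on $\ker B$ might destroy either contractivity or the positivity of $\langle Ky,y\rangle$ for $y$ with components in both summands. The clean way around it is to note that $A=KB$ already forces $\ran A\subseteq\overline{\ran B}$ (indeed $\|Ax\|\leq\|Bx\|$ gives $\ker B\subseteq\ker A$, so $\ran A=\ran A|_{\overline{\ran B}}\subseteq\overline{\ran B}$ after taking closures), so one may take $K$ supported on $\overline{\ran B}$ from the start; then positivity on that subspace, proved by the displayed computation, together with $\|K\|\leq1$ gives the desired positive contraction, and uniqueness of such a $K$ on $\overline{\ran B}$ is automatic since $K$ is pinned down there by $A=KB$.
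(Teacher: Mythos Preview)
Your argument is correct and shares the decisive computation with the paper: both show positivity of $K$ on $\ran B$ via
\[
\langle K(Bx),Bx\rangle=\langle Ax,Bx\rangle=\langle BAx,x\rangle\geq0,
\]
then pass to the closure. The routes diverge in how non-injectivity of $B$ is handled. You work directly with the orthogonal decomposition $H=\ker B\oplus\overline{\ran B}$: from $\ker B\subseteq\ker A$ (and self-adjointness) you get $\ran A\subseteq\overline{\ran B}$, so $K$ may be taken to map $\overline{\ran B}$ into itself and vanish on $\ker B$, whence global positivity follows from positivity on each summand. The paper instead first notes that $BA\geq0$ forces $BA$ to be self-adjoint, i.e.\ $AB=BA$; this commutativity makes $\ker A$ reduce both operators, so one may pass to $(\ker A)^{\perp}$ and assume $A$ injective. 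Then $B$ is injective too, $\ran B$ is dense, and $K_0:=AB^{-1}$ extends uniquely to a contraction on all of $H$, sidestepping your splitting bookkeeping entirely. Your approach has the merit of never invoking the commutativity hidden in the hypothesis $BA\geq0$; the paper's reduction buys a cleaner endgame (dense range, unique extension) at the cost of that one extra structural observation.
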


\begin{proof}\hfill
\begin{enumerate}
  \item "$\Leftarrow$":  Let $x\in H$. Then
  \[0 \leq <KBx,Bx> = <Ax,Bx> = <BAx,x>,\]
  that is, $BA\geq 0$.
  \item "$\Rightarrow$": Since $BA \geq 0$, it follows that $BA$ is self-adjoint, i.e. $AB=BA$. As a
consequence, $\ker A$ reduces $A$ and $B$, and the restriction of
$A$ to $\ker A$ is the zero operator on $\ker A$. Hence, we can
assume that $A$ is injective. Therefore, because $\ker B \subset
\ker A =\{0\}$, we see that $B^{-1}$ is self-adjoint and densely
defined. Set $K_0=AB^{-1}$. Then $K_0$ is densely defined and

\[||K_0(Bx)|| = ||AB^{-1}Bx|| = ||Ax|| \leq||Bx||,  \forall x \in H,\]

\end{enumerate}
signifying that $K_0$ is a contraction with a unique contractive
extension $K$ to the whole $H$. Since
\[<K_0(Bx), Bx> = <Ax,Bx>=<BAx,x> \geq 0\]
for all $x \in H$, we see that $K$ is positive as well. Clearly
$KBx= K_0(Bx) = Ax$ for all $x\in H$, which completes the proof.
\end{proof}

\begin{cor}Let $A,B\in B(H)$ be positive and commuting. Then:
\begin{enumerate}
  \item $0\leq A\leq B\Rightarrow \sqrt A\leq \sqrt B$.
  \item $0\leq A\leq B\Rightarrow B^{-1}\leq A^{-1}$, whenever $A$ is invertible.
\end{enumerate}
\end{cor}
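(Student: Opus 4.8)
The plan is to deduce both statements directly from Lemma~\ref{Lemma MAIN Stochel improvement LEMMA}, exploiting the commutativity hypothesis to check the positivity condition $BA\geq 0$ for free.

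For part (1), I would first observe that since $A\leq B$ we have $\|\sqrt A x\|^2 = <Ax,x>\leq <Bx,x> = \|\sqrt B x\|^2$ for every $x\in H$, exactly as in the proof of Theorem~\ref{theorem squrae root first}. To invoke Lemma~\ref{Lemma MAIN Stochel improvement LEMMA} with the pair $(\sqrt A,\sqrt B)$ I must verify that $\sqrt B\sqrt A\geq 0$; but since $A$ and $B$ commute, so do $\sqrt A$ and $\sqrt B$ (the square root of a positive operator is a norm-limit of polynomials in that operator), hence $\sqrt B\sqrt A = \sqrt A\,\sqrt B$ is a product of two commuting positive operators and is therefore positive. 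The lemma then yields a \emph{positive} contraction $K$ with $\sqrt A = K\sqrt B$. Taking inner products with $x$ gives $<\sqrt A x,x> = <K\sqrt B x,x>$. Writing $\sqrt B x = y$ and using that $K$ is a positive contraction, so $<Ky,y>\leq <y,y>$, and that $<y,y> = <\sqrt B x,\sqrt B x> = <Bx,x>$... wait, that gives $<Kx',x'>$ with the wrong vector; more carefully, $<K\sqrt B x,x> = <\sqrt{K}\sqrt B x,\sqrt{K}^*x>$ and one applies the generalized Cauchy--Schwarz inequality of Lemma~\ref{Cauchy-Schwarz generalized Lemma} to the positive operator $K$, bounding $<K\sqrt B x,x>^2 \leq <K\sqrt B x,\sqrt B x><Kx,x>$; since $K$ is a contraction the second factor is $\leq <x,x>$ is not quite $<\sqrt B x,x>$ either, so the cleanest route is: $<\sqrt A x,x> = <K\sqrt B x,x>$, and because $K$ is a positive contraction it satisfies $<Ku,u>\leq<u,u>$, but to compare with $<\sqrt B x,x>$ I instead note $\sqrt A = K\sqrt B$ is self-adjoint, so $\sqrt A = \sqrt B K$ as well (the adjoint), whence $<\sqrt A x,x> = <\sqrt B K x,x> = <\sqrt B^{1/2} K x, \sqrt B^{1/2} x>$ and Cauchy--Schwarz plus $\|K\|\leq 1$... the slickest is really: since $K\geq 0$ is a contraction, $K\leq I$, so $\sqrt B^{1/2} K \sqrt B^{1/2}\leq \sqrt B^{1/2} I \sqrt B^{1/2} = \sqrt B$, and $\sqrt B^{1/2} K \sqrt B^{1/2}$ is similar to $\sqrt B K = \sqrt A$ hence has the same spectrum and, being self-adjoint and positive, the inequality of quadratic forms transfers; more directly, since everything commutes, $\sqrt A = K\sqrt B$ with $0\leq K\leq I$ gives $\sqrt A\leq \sqrt B$ immediately because multiplying $K\leq I$ by the positive commuting operator $\sqrt B$ preserves the inequality, using the fact recalled in the Notations section.

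For part (2), I would argue in complete parallel to Theorem~\ref{theorem 2}: from $A\leq B$ and the commutativity we again get a positive contraction $K$ with $\sqrt A = K\sqrt B = \sqrt B K$; since $A$, hence $\sqrt A$, is invertible, $I = (\sqrt A)^{-1}K\sqrt B$ shows $\sqrt B$ is left-invertible, hence invertible, so $B$ is invertible, and $(\sqrt B)^{-1} = (\sqrt A)^{-1}K$. Then for $x\in H$, $<B^{-1}x,x> = \|(\sqrt B)^{-1}x\|^2 = \|(\sqrt A)^{-1}K x\|^2$... one should be careful about which side $K$ sits; using $(\sqrt B)^{-1} = K(\sqrt A)^{-1}$ (from $\sqrt A = \sqrt B K$, inverting) gives $<B^{-1}x,x> = \|K(\sqrt A)^{-1}x\|^2\leq \|(\sqrt A)^{-1}x\|^2 = <A^{-1}x,x>$ since $K$ is a contraction, exactly as before, with the bonus that we needed no separate argument.

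The main obstacle I anticipate is purely bookkeeping: making sure the factorization $\sqrt A = K\sqrt B$ is used on the correct side in each application (left vs. right, since $K$ here is positive and self-adjoint the two forms $K\sqrt B$ and $\sqrt B K$ coincide, which actually removes the difficulty) and, in part (1), deciding whether to finish via the commuting-product trick ($0\leq K\leq I$ times commuting positive $\sqrt B$) or via the generalized Cauchy--Schwarz computation mirroring Theorem~\ref{theorem squrae root first}. Given the hypothesis of commutativity, the product trick is the shortest: since $\sqrt A$, $\sqrt B$, $K$ all commute and $0\leq K\leq I$, the inequality $\sqrt A = K\sqrt B\leq \sqrt B$ follows at once from the remark in the Notations section that $A\leq B$ implies $AC\leq BC$ for positive commuting $C$. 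The only genuine verification is that $\sqrt B\sqrt A\geq 0$, which is where commutativity is essential and which is immediate from the continuous functional calculus.
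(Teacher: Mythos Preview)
Your proposal is correct and, once the exploratory detours are stripped away, lands on essentially the paper's argument for part~(1): obtain a \emph{positive} contraction $K$ with $\sqrt A=K\sqrt B$ from Lemma~\ref{Lemma MAIN Stochel improvement LEMMA}, observe $K\sqrt B=\sqrt BK$ by self-adjointness of $\sqrt A$ and $K$, and then use $0\leq K\leq I$ together with the commuting-product remark from the Notations section to conclude $\sqrt A=K\sqrt B\leq\sqrt B$. (Your explicit check that $\sqrt B\sqrt A\geq 0$ is in fact the precise hypothesis the lemma needs; the paper phrases this as $AB\geq 0$, which implies it.) For part~(2) you diverge slightly: you invert $\sqrt A=\sqrt BK$ to $(\sqrt B)^{-1}=K(\sqrt A)^{-1}$ and finish via the norm estimate $\|K(\sqrt A)^{-1}x\|\leq\|(\sqrt A)^{-1}x\|$, exactly as in Theorem~\ref{theorem 2}, whereas the paper squares to $A=BK^2$, deduces $B^{-1}=K^2A^{-1}=A^{-1}K^2$, and reapplies the commuting-product trick with $0\leq K^2\leq I$. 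Both routes are equally short and valid; the paper's version has the aesthetic merit of using the same device twice, while yours simply recycles Theorem~\ref{theorem 2}.
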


\begin{proof}Since $A$ and $B$ are positive and commuting, we obviously know that
$AB\geq0$. Mimicking the argument at the beginning of the proof of
Theorem \ref{theorem squrae root first} combined with Lemma
\ref{Lemma MAIN Stochel improvement LEMMA} give: $\sqrt A=K\sqrt B$
for some \textit{positive} contraction $K$. As above, it follows
that $K\sqrt B=\sqrt BK$.
\begin{enumerate}
  \item We clearly have:
  \[0\leq K\leq I\Longrightarrow \sqrt A=K\sqrt B\leq \sqrt B,\]
and this proves the first statement.
  \item Also, $A=\sqrt B K\sqrt BK=BK^2$. Since $A$ is invertible,
  $BK^2A^{-1}=I$, i.e. the self-adjoint $B$ is right invertible and
  so $B$ is invertible (cf. \cite{Dehimi-Mortad-II}) and $B^{-1}=K^2A^{-1}$. Therefore, as
  $K^2A^{-1}=A^{-1}K^2$, then
  \[0\leq K^2\leq I\Longrightarrow B^{-1}=A^{-1}K^2\leq A^{-1},\]
  as required.
\end{enumerate}
\end{proof}

\begin{cor}Let $A,B\in B(H)$ be positive and commuting. Then
\[0\leq A\leq B\Longrightarrow A^2\leq B^2.\]
\end{cor}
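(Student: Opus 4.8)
The plan is to mimic exactly the structure used in the preceding corollary, replacing the square root with the square. Since $A$ and $B$ are positive and commuting, we have $AB\geq 0$, and since $A\leq B$, a routine computation gives $\|Ax\|\leq \|Bx\|$ for all $x\in H$: indeed, writing $B-A\geq 0$ and using that $A$ commutes with $B$, one gets $\|Bx\|^2-\|Ax\|^2=\langle(B^2-A^2)x,x\rangle=\langle(B-A)(B+A)x,x\rangle\geq 0$ because $(B-A)$ and $(B+A)$ are commuting positive operators, hence their product is positive. So the hypotheses of Lemma \ref{Lemma MAIN Stochel improvement LEMMA} are met with the pair $(A,B)$.

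Applying that lemma, I would obtain a positive contraction $K\in B(H)$ with $A=KB$. Since $A$ is self-adjoint, $KB=BK^*=BK$ (as $K$ is self-adjoint), so $K$ commutes with $B$; and $K$ also commutes with $A=KB$. Then I would simply compute
\[A^2=(KB)(KB)=K^2B^2.\]
Because $0\leq K^2\leq I$ and $K^2$ commutes with the positive operator $B^2$, the rule recalled in the Notations section (if $C\leq D$ then $CE\leq DE$ for positive $E$ commuting with both) gives $K^2B^2\leq B^2$, i.e. $A^2\leq B^2$, as desired.

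The only mild subtlety — and the step I would be most careful about — is the verification that $\|Ax\|\leq\|Bx\|$, since that is what licenses the use of Lemma \ref{Lemma MAIN Stochel improvement LEMMA}; it rests on the fact that the product of two commuting positive operators is positive, which in turn follows from the Löwner–Heinz-type remark in the Notations (taking $C=B-A\geq 0$, $E=B+A\geq 0$, both commuting). Everything else is a direct transcription of the argument in the previous corollary, with $\sqrt{\cdot}$ replaced by squaring, so no new obstacle arises.
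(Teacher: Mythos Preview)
Your argument is correct, but notice that the step you call the ``mild subtlety'' already \emph{is} the corollary: the inequality $\|Ax\|\leq\|Bx\|$ for all $x$ is exactly $\langle A^2x,x\rangle\leq\langle B^2x,x\rangle$ for all $x$, i.e.\ $A^2\leq B^2$. Your factorization $B^2-A^2=(B-A)(B+A)$ together with the positivity of a product of commuting positive operators finishes the proof on the spot; the subsequent appeal to Lemma~\ref{Lemma MAIN Stochel improvement LEMMA} and the contraction $K$ is redundant.

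The paper takes a different route precisely to make the lemma do the work. It applies Lemma~\ref{Lemma MAIN Stochel improvement LEMMA} to the pair $(\sqrt{A},\sqrt{B})$ rather than $(A,B)$: there the norm hypothesis $\|\sqrt{A}x\|\leq\|\sqrt{B}x\|$ is nothing but the assumption $A\leq B$, so no preliminary inequality is needed. The lemma yields $\sqrt{A}=K\sqrt{B}$ with $K$ a positive contraction commuting with $\sqrt{B}$, hence $A=K^2B$, and then the contraction property gives $\|Ax\|=\|K^2Bx\|\leq\|Bx\|$, which is $A^2\leq B^2$. So the paper \emph{derives} $\|Ax\|\leq\|Bx\|$ as the conclusion, whereas you establish it directly via the algebraic factorization and thereby bypass the lemma entirely. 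Your direct argument is the more elementary one; the paper's version is there to showcase the lemma.
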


\begin{proof}Since $AB\geq 0$, we know by Lemma \ref{Lemma MAIN Stochel improvement
LEMMA} that $\sqrt A=K\sqrt B$ for some positive contraction $K\in
B(H)$ and $K\sqrt B=\sqrt B K$. Hence
\[A=K\sqrt B K\sqrt B=K^2B.\]
So for all $x\in H$:
\[\|Ax\|^2=\|K^2Bx\|^2\leq \|Bx\|^2\]
\text{ or merely }
\[<A^2x,x>=<Ax,Ax>=\|Ax\|^2\leq \|Bx\|^2=<B^2x,x>,\]
as required.
\end{proof}

\end{document}